\theoremstyle{plain}
\newtheorem{theorem}{Theorem}[section]
\newtheorem{proposition}[theorem]{Proposition}
\newtheorem{lemma}[theorem]{Lemma}
\theoremstyle{definition}
\def\bdf{\begin{defn}}
\def\edf{\end{defn}}
\begin{document}

\title{Lifting involutions in a Weyl group to the normalizer of the torus}

\author{Moshe Adrian}
\address{Moshe Adrian:  Department of Mathematics, Queens College, CUNY, Queens, NY 11367-1597}
\email{moshe.adrian@qc.cuny.edu}

\maketitle

\begin{abstract}
Let $N$ be the normalizer of a maximal torus $T$ in a split reductive group over $\mathbb{F}_q$ and let $w$ be an involution in the Weyl group $N/T$.  We construct a section of $W$ satisfying the braid relations, such that the image of the lift $n$ of $w$ under the Frobenius map is equal to the inverse of $n$. 
\end{abstract}

\section{Introduction}\label{intro}
Let $G$ be a connected reductive algebraic group over an algebraically closed field $F$.  Let $T$ be a maximal torus in $G$, and let $W = N / T$ denote the associated Weyl group, where $N$ denotes the normalizer of $T$ in $G$, and let $X_*(T)$ denote the cocharacter lattice of $T$.  Fix a \emph{realization} of the root system $\Phi$ in $G$ (see \S\ref{prelim}), a set of positive roots $\Phi^+$, and let $\Delta$ be the associated set of simple roots.  We obtain the Tits section $w \mapsto \dot{w}$ of the natural map $N \rightarrow W$ \cite{Tit66}.   

Let us recall some setup from a recent work of Lusztig \cite{Lus18}.  If $F$ is an algebraic closure of $\mathbb{F}_q$, we define $\phi : F \rightarrow F$ by $\phi(c) = c^q$, and if $F = \mathbb{C}$, we define $\phi : F \rightarrow F$ by $\phi(c) = \overline{c}$ (complex conjugation).  In the first case, we assume that $G$ has a fixed $\mathbb{F}_q$-rational structure with Frobenius map $\phi : G \rightarrow G$ such that $\phi(t) = t^q$ for all $t \in T$.  In the second case, we assume that $G$ has a fixed $\mathbb{R}$-structure so that $G(\mathbb{R})$ is the fixed point set of an antiholomorphic involution $\phi : G \rightarrow G$ such that $\phi(y(c)) = y(\phi(c))$ for all $y \in X_*(T), c \in F^{\times}$.  We may also assume that $\phi(\dot{w}) = \dot{w}$ for any $w \in W$.

Now let $w$ be an involution in $W$.  In \cite{Lus18}, a lift $n$ of $w$ was constructed such that $\phi(n) = n^{-1}$.  The construction was quite complicated, and involved reduction arguments and case by case computations.  In this paper, we construct a natural section $\mathcal{S}$ of the entire Weyl group $W$ that satisfies the braid relations, which accomplishes the same result (namely, that $\phi(\mathcal{S}(w)) = \mathcal{S}(w)^{-1}$ for any involution $w$ in $W$).  Our methodology in one sense illustrates the power of the braid relations, allowing us to prove the main result quickly.  We moreover note that in a recent work \cite{Adr21}, all sections of the Weyl group that satisfy the braid relations were computed, for an almost-simple connected reductive group over an algebraically closed field.

We explain our method.  Let $\mathcal{S}$ be any section of $W$ (by a \emph{section} of $W$, we mean a section of the map $N \rightarrow W$).  For $\alpha \in \Delta$, we may write $\mathcal{S}(s_{\alpha}) = t_{\alpha} \dot{s}_{\alpha}$ for some $t_{\alpha} \in T$, where $s_{\alpha} \in W$ is the simple reflection associated to $\alpha$. Let $z_{\alpha} \in F^{\times}$ be arbitrary, with $\alpha \in \Delta$, and consider now the specific torus elements $t_{\alpha} = \alpha^{\vee}(z_{\alpha})$.  We show that the map $s_{\alpha} \mapsto t_{\alpha} \dot{s}_{\alpha}$ extends to a section, denoted $\mathcal{S}$, of $W$ that satisfies the braid relations.   We then prove that the condition $\phi(t_{\alpha}) = \alpha^{\vee}(-1) t_{\alpha} \ \forall \alpha \in \Delta$ implies that $\phi(\mathcal{S}(w)) = \mathcal{S}(w)^{-1}$ for any involution $w$ in $W$.  We therefore conclude that if we define a section of $W$ by the property $s_{\alpha} \mapsto \alpha^{\vee}(z_{\alpha}) \dot{s}_{\alpha}$ for all $\alpha \in \Delta$, where $z_{\alpha} \in F^{\times}$ are such that $\alpha^{\vee}(\phi(z_\alpha) z_{\alpha}^{-1}) = \alpha^{\vee}(-1)$, then we accomplish the goal of the paper.  In the finite field case, this equality can be accomplished by setting $z_{\alpha}$ to be a $q-1$ root of $-1$ for every $\alpha \in \Delta$, and in the real case the equality can be accomplished by setting $z_{\alpha}$ to be a primitive fourth root of unity for every $\alpha \in \Delta$.  Our main result, therefore, is:

\begin{theorem}\label{maintheorem}
If $F$ is an algebraic closure of $\mathbb{F}_q$, set $\zeta$ to be a $q-1$ root of $-1$.  If $F = \mathbb{C}$, set $\zeta$ to be a primitive fourth root of unity. 

For each $\alpha \in \Delta$, define the map $s_{\alpha} \mapsto \alpha^{\vee}(\zeta) \dot{s}_{\alpha}$.  This maps extends to a section $\mathcal{S} : W \rightarrow N$ that satisfies the braid relations.  Moreover, for any involution $w$ in $W$, $\phi(\mathcal{S}(w)) = \mathcal{S}(w)^{-1}$. 
\end{theorem}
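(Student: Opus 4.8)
The plan is to split the statement into three pieces: (i) the twisted map $s_\alpha\mapsto\alpha^\vee(\zeta)\dot s_\alpha$ really does extend to a section of $N\to W$ obeying the braid relations; (ii) a base case, $\phi(\mathcal S(s_\alpha))=\mathcal S(s_\alpha)^{-1}$ for each simple root $\alpha$; and (iii) a purely formal step that promotes (i) and (ii) to the identity $\phi(\mathcal S(w))=\mathcal S(w^{-1})^{-1}$ for \emph{every} $w\in W$, which yields the theorem once we specialize to involutions. The point of this organization is that one never has to understand the structure of involutions in $W$ (e.g.\ the decomposition $w=u\,w_0(J)\,u^{-1}$): the braid relations do all of the bookkeeping.

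For (i): write $\mathcal S(s_\gamma)=\gamma^\vee(\zeta)\dot s_\gamma$ for $\gamma\in\{\alpha,\beta\}$, and for a pair of simple roots $\alpha\ne\beta$ with $m=\mathrm{ord}(s_\alpha s_\beta)$ expand the alternating product of length $m$. Using $\dot s_\gamma\,\lambda(c)\,\dot s_\gamma^{-1}=(s_\gamma\lambda)(c)$ to push all torus factors to the left, the braid word $s_{\gamma_1}\cdots s_{\gamma_m}$ with $(\gamma_1,\dots,\gamma_m)=(\alpha,\beta,\alpha,\dots)$ produces $\bigl(\prod_{k=1}^m (s_{\gamma_1}\cdots s_{\gamma_{k-1}}\gamma_k^\vee)(\zeta)\bigr)\,\dot s_{\gamma_1}\cdots\dot s_{\gamma_m}$. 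The roots $s_{\gamma_1}\cdots s_{\gamma_{k-1}}(\gamma_k)$, $k=1,\dots,m$, are precisely the positive roots of the rank-two subsystem $\Psi=\Phi\cap(\Z\alpha+\Z\beta)$, each occurring once; hence the torus factor equals $\bigl(\sum_{\psi\in\Psi^+}\psi^\vee\bigr)(\zeta)$, which does not depend on whether the word started with $\alpha$ or with $\beta$. Since the Tits section satisfies the braid relations, $\dot s_{\gamma_1}\cdots\dot s_{\gamma_m}$ is likewise independent of that choice, so the two length-$m$ alternating products in $\mathcal S(s_\alpha),\mathcal S(s_\beta)$ agree. By the word property of Coxeter groups the map then extends, unambiguously via reduced words, to a section $\mathcal S\colon W\to N$; it is a section because $\alpha^\vee(\zeta)\in T$.

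For (ii) and (iii): the choice of $\zeta$ forces $\phi(\alpha^\vee(\zeta))=\alpha^\vee(-\zeta)=\alpha^\vee(-1)\alpha^\vee(\zeta)$ --- in the finite field case because $\phi(t)=t^q$ and $\zeta^{q-1}=-1$, in the complex case because $\phi(y(c))=y(\bar c)$ and $\bar\zeta=-\zeta$ for a primitive fourth root of unity. Combined with $\phi(\dot s_\alpha)=\dot s_\alpha$ this gives $\phi(\mathcal S(s_\alpha))=\alpha^\vee(-\zeta)\dot s_\alpha$. On the other side, using the Tits relation $\dot s_\alpha^2=\alpha^\vee(-1)$ together with $s_\alpha(\alpha^\vee)=-\alpha^\vee$ one computes $\mathcal S(s_\alpha)^{-1}=\dot s_\alpha^{-1}\alpha^\vee(\zeta^{-1})=\alpha^\vee(-1)\,\dot s_\alpha\,\alpha^\vee(\zeta^{-1})=\alpha^\vee(-1)\alpha^\vee(\zeta)\dot s_\alpha=\alpha^\vee(-\zeta)\dot s_\alpha$ as well, so $\phi(\mathcal S(s_\alpha))=\mathcal S(s_\alpha)^{-1}$. (Equivalently $\mathcal S(s_\alpha)^2=\alpha^\vee(-1)$, i.e.\ the twisted section has the same squares as the Tits section.) Now fix $w\in W$ with a reduced word $w=s_{i_1}\cdots s_{i_\ell}$, so $w^{-1}=s_{i_\ell}\cdots s_{i_1}$ is also reduced. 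Applying the group homomorphism $\phi$ to $\mathcal S(w)=\mathcal S(s_{i_1})\cdots\mathcal S(s_{i_\ell})$ and using the base case gives $\phi(\mathcal S(w))=\mathcal S(s_{i_1})^{-1}\cdots\mathcal S(s_{i_\ell})^{-1}$; but also $\mathcal S(w^{-1})=\mathcal S(s_{i_\ell})\cdots\mathcal S(s_{i_1})$, hence $\mathcal S(w^{-1})^{-1}=\mathcal S(s_{i_1})^{-1}\cdots\mathcal S(s_{i_\ell})^{-1}$. Therefore $\phi(\mathcal S(w))=\mathcal S(w^{-1})^{-1}$ for all $w$, and for $w$ an involution this is exactly $\phi(\mathcal S(w))=\mathcal S(w)^{-1}$.

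I expect the only real subtlety to lie in (i): the observation that twisting the Tits section by the \emph{same} cocharacter value $\alpha^\vee(\zeta)$ at each simple root preserves the braid relations, because the product of coroots over the inversion set of a rank-two longest element collapses to the manifestly word-independent $\sum_{\psi\in\Psi^+}\psi^\vee$. (For non-constant twists $\alpha^\vee(z_\alpha)$ this step instead needs a short check over the four rank-two types, but with the uniform $\zeta$ of the theorem it is immediate.) Given (i), steps (ii) and (iii) are routine manipulations with the Tits relations and with reduced words.
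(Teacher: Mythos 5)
Your proof is correct, and its key step is genuinely different from the paper's. Where the paper handles the involution statement by quoting Deodhar's theorem (Proposition \ref{involutions}) --- every involution is built from $e$ by length-increasing multiplication by a commuting simple reflection or conjugation by a non-commuting one --- and then running an induction along that construction (Proposition \ref{condition}), you prove the stronger identity $\phi(\mathcal{S}(w))=\mathcal{S}(w^{-1})^{-1}$ for \emph{every} $w\in W$ in one line: apply the homomorphism $\phi$ to $\mathcal{S}(w)=\mathcal{S}(s_{i_1})\cdots\mathcal{S}(s_{i_\ell})$, use the base case $\phi(\mathcal{S}(s_\alpha))=\mathcal{S}(s_\alpha)^{-1}$ on each factor, and recognize the resulting product $\mathcal{S}(s_{i_1})^{-1}\cdots\mathcal{S}(s_{i_\ell})^{-1}$ as $\mathcal{S}(w^{-1})^{-1}$ because $s_{i_\ell}\cdots s_{i_1}$ is a reduced word for $w^{-1}$. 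This eliminates the combinatorics of involutions entirely (Propositions \ref{involutions} and \ref{condition} of the paper become unnecessary) and yields a cleaner, more general statement; the only price is that it leans harder on the multiplicativity of $\mathcal{S}$ over reduced words, which you have anyway from step (i). In step (i) you also diverge: the paper verifies the braid relations for arbitrary twists $\alpha^\vee(z_\alpha)$ by explicit computation in the four rank-two types, whereas your inversion-set argument --- the torus factor collapses to $\bigl(\sum_{\psi\in\Psi^+}\psi^\vee\bigr)(\zeta)$, manifestly independent of the reduced word --- is uniform but works only for the constant twist $z_\alpha=\zeta$; you correctly flag that limitation, and the constant case is all the theorem requires. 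Your base-case computation $\phi(\mathcal{S}(s_\alpha))=\alpha^\vee(-\zeta)\dot s_\alpha=\mathcal{S}(s_\alpha)^{-1}$ matches the paper's Propositions \ref{condition} (first paragraph) and \ref{zeta}.
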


\subsection{Acknowledgements}
We thank Jeffrey Adams for helpful comments on an earlier draft of this paper.

\section{Preliminaries}\label{prelim}
We first remind the reader of the definition of Tits' section from \cite{Tit66}, as well as some generalities about general sections.  We follow \cite[\S8.1, \S9.3]{Spr98} closely.  Let $G$ be a connected reductive group over an algebraically closed field $F$, let $T$ be a maximal torus in $G$, and let $\Phi$ be the associated set of roots.  For each $\alpha \in \Phi$, let $s_{\alpha}$ denote the associated reflection in the Weyl group $W = N / T$.

\begin{proposition}\cite[Proposition 8.1.1]{Spr98}\label{properties}
\begin{enumerate}
\item For $\alpha \in \Phi$ there exists an isomorphism $u_{\alpha}$ of $\mathbf{G}_a$ onto a unique closed subgroup $U_{\alpha}$ of $G$ such that $t u_{\alpha}(x) t^{-1} = u_{\alpha}(\alpha(t) x) \ (t \in T, x \in F)$.
\item $T$ and the $U_{\alpha} \ (\alpha \in \Phi)$ generate $G$.
\end{enumerate}
\end{proposition}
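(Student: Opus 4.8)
The plan is to prove both statements by reducing to the structure theory of connected reductive groups of semisimple rank one. First I would record the weight-space decomposition of the Lie algebra $\mathfrak{g} = \mathrm{Lie}(G)$ under the adjoint action of $T$. Writing $\mathfrak{g}_{\alpha} = \{ X \in \mathfrak{g} : \mathrm{Ad}(t) X = \alpha(t) X \ \forall t \in T \}$ and using that a maximal torus of a connected reductive group is its own centralizer (so that the $T$-fixed part of $\mathfrak{g}$ is $\mathrm{Lie}(Z_G(T)) = \mathfrak{t} := \mathrm{Lie}(T)$), one obtains $\mathfrak{g} = \mathfrak{t} \oplus \bigoplus_{\alpha \in \Phi} \mathfrak{g}_{\alpha}$, with each root space $\mathfrak{g}_{\alpha}$ one-dimensional. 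This one-dimensionality, like everything that follows, is most cleanly extracted from the rank-one reduction below.

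Fix $\alpha \in \Phi$ and set $S = (\ker \alpha)^{\circ}$, a codimension-one subtorus of $T$, and $M = Z_G(S)$. Then $M$ is connected reductive with maximal torus $T$, and its roots are exactly those $\beta \in \Phi$ trivial on $S$, namely $\{\pm \alpha\}$; thus $M$ has semisimple rank one. Its derived group $(M,M)$ is then semisimple of rank one, hence isomorphic to $\mathbf{SL}_2$ or $\mathbf{PGL}_2$. In either model one has an explicit one-dimensional unipotent subgroup $U_{\alpha} \cong \mathbf{G}_a$, normalized by $T$, with $\mathrm{Lie}(U_{\alpha}) = \mathfrak{g}_{\alpha}$; this yields the desired isomorphism $u_{\alpha}$ of $\mathbf{G}_a$ onto $U_{\alpha}$. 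For the conjugation formula I would observe that conjugation by $T$ gives a homomorphism $T \to \Aut(U_{\alpha}) \cong \Aut(\mathbf{G}_a)$; since $T$ is a torus, its image lands in the subgroup $\mathbf{G}_m$ of scalar automorphisms, so $t\, u_{\alpha}(x)\, t^{-1} = u_{\alpha}(\chi(t) x)$ for some $\chi \in X^*(T)$. Differentiating and comparing with the $T$-action on $\mathfrak{g}_{\alpha}$ forces $\chi = \alpha$.

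For uniqueness I would use the following observation: if $V$ is any closed subgroup isomorphic to $\mathbf{G}_a$ satisfying the same conjugation law, then for $s \in S$ we have $\alpha(s) = 1$, so $s$ centralizes $V$; hence $V \subseteq Z_G(S) = M$, and within the rank-one group $M$ the only one-dimensional $T$-stable unipotent subgroups are $U_{\pm\alpha}$, on which $T$ acts through $\pm\alpha$. Since $V$ has weight $\alpha$, it must equal $U_{\alpha}$. This settles part (1). For part (2), let $H$ be the closed connected subgroup generated by $T$ and all the $U_{\alpha}$. Then $\mathrm{Lie}(H)$ contains $\mathfrak{t}$ and every $\mathfrak{g}_{\alpha}$, hence equals $\mathfrak{g} = \mathrm{Lie}(G)$; as $H$ is a smooth closed connected subgroup of the connected group $G$ with $\dim H = \dim \mathrm{Lie}(H) = \dim G$, we conclude $H = G$.

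The main obstacle is not any single calculation but the rank-one input: establishing that $(M,M) \cong \mathbf{SL}_2$ or $\mathbf{PGL}_2$ — and with it the one-dimensionality of the root spaces and the explicit root subgroups — is where the real content lies, and it rests on the classification of semisimple rank-one groups. A secondary subtlety is the uniqueness in positive characteristic, where a connected subgroup need not be determined by its Lie algebra; this is precisely what the centralizing trick $V \subseteq Z_G(S)$ circumvents, bypassing any appeal to such a determination.
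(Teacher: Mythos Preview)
The paper does not give its own proof of this proposition; it is quoted as background from Springer's textbook \cite[Proposition~8.1.1]{Spr98} without argument. Your sketch is essentially the standard proof found there: the rank-one reduction via $M = Z_G((\ker\alpha)^{\circ})$, the classification of connected semisimple groups of rank one as $\mathbf{SL}_2$ or $\mathbf{PGL}_2$, and the Lie-algebra dimension count for part~(2). So there is nothing in the paper to compare against, but your outline is correct and matches the cited source; the one point worth flagging is that your uniqueness argument for $U_\alpha$ in positive characteristic (forcing $V \subseteq M$ via centralizing $S$, then reading off the weight inside the rank-one group) is exactly the right workaround, and Springer handles it the same way.
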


Tits then defines a representative $\sigma_{\alpha}$, of $s_{\alpha}$, in $N$:

\begin{lemma}\cite[Lemma 8.1.4]{Spr98}\label{realization}
\begin{enumerate}
\item The $u_{\alpha}$ may be chosen such that for all $\alpha \in \Phi$, 
\[
\sigma_{\alpha} = u_{\alpha}(1) u_{-\alpha}(-1) u_{\alpha}(1)
\]
lies in $N$ and has image $s_{\alpha}$ in $W$.  For $x \in F^{\times}$, we have
\[
u_{\alpha}(x) u_{-\alpha}(-x^{-1}) u_{\alpha}(x) = \alpha^{\vee}(x) \sigma_{\alpha};
\]
\item $\sigma_{\alpha}^2 = \alpha^{\vee}(-1)$ and $\sigma_{-\alpha} = \sigma_{\alpha}^{-1}$;
\item If $u \in U_{\alpha} - \{1 \}$ there is a unique $u' \in U_{-\alpha} - \{1 \}$ such that $u u' u \in N$;
\item If $(u_{\alpha}')_{\alpha \in \Phi}$ is a second family with the property (1) of Proposition \ref{properties} and property (1) of Lemma \ref{realization}, there exist $c_{\alpha} \in F^{\times}$ such that
\[
u_{\alpha}'(x) = u_{\alpha}(c_{\alpha} x), \ c_{\alpha} c_{-\alpha} = 1 \ (\alpha \in \Phi, x \in F).
\]
\end{enumerate}
\end{lemma}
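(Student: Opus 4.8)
The plan is to reduce every assertion to an explicit computation inside a rank-one subgroup, where the ambient group is, up to isogeny, $\SL_2$. The four parts are not independent: I would prove (3) early, since both the normalization in (1) and the uniqueness in (4) rest on it.

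\emph{Step 1 (reduction to rank one).} Fix $\alpha \in \Phi$ and let $Z_\alpha = \langle U_\alpha, U_{-\alpha}\rangle$, a connected reductive group of semisimple rank one. By the structure theory of such groups there is a homomorphism $\varphi_\alpha : \SL_2 \to G$ with image $Z_\alpha$ and central kernel, carrying the upper (resp.\ lower) unipotent one-parameter subgroup onto $U_\alpha$ (resp.\ $U_{-\alpha}$) and the diagonal torus onto $\operatorname{im}(\alpha^\vee)$. Since Proposition~\ref{properties} determines $u_{\pm\alpha}$ only up to a rescaling $x \mapsto cx$, I may normalize so that
\[ \varphi_\alpha\!\left(\begin{smallmatrix} 1 & x \\ 0 & 1\end{smallmatrix}\right) = u_\alpha(x), \qquad \varphi_\alpha\!\left(\begin{smallmatrix} 1 & 0 \\ x & 1\end{smallmatrix}\right) = u_{-\alpha}(x), \qquad \varphi_\alpha\!\left(\begin{smallmatrix} t & 0 \\ 0 & t^{-1}\end{smallmatrix}\right) = \alpha^\vee(t). \]
Because $\ker\varphi_\alpha$ is central, hence contained in the diagonal torus, an element $\varphi_\alpha(M)$ lies in $N$ exactly when $M$ normalizes the diagonal torus of $\SL_2$; thus every identity in (1)--(3) may be checked after pulling back along $\varphi_\alpha$.

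\emph{Step 2 (part (3), then (1) and (2)).} For $u = u_\alpha(x)$ with $x\neq 0$ and a candidate $u' = u_{-\alpha}(y)$, the product $uu'u$ is the $\varphi_\alpha$-image of
\[ \left(\begin{smallmatrix} 1 & x \\ 0 & 1\end{smallmatrix}\right)\left(\begin{smallmatrix} 1 & 0 \\ y & 1\end{smallmatrix}\right)\left(\begin{smallmatrix} 1 & x \\ 0 & 1\end{smallmatrix}\right) = \left(\begin{smallmatrix} 1+xy & x(2+xy) \\ y & 1+xy\end{smallmatrix}\right). \]
Its lower-left entry $y$ is nonzero, so this matrix normalizes the diagonal torus precisely when it is antidiagonal, i.e.\ when $1+xy=0$; this yields the unique $u' = u_{-\alpha}(-x^{-1})$, which is (3). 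Specializing $x=1$ shows $\sigma_\alpha := u_\alpha(1)u_{-\alpha}(-1)u_\alpha(1)\in N$, the normalization asserted in (1). Part (1) is then completed by the matrix identity
\[ \left(\begin{smallmatrix} 1 & x \\ 0 & 1\end{smallmatrix}\right)\left(\begin{smallmatrix} 1 & 0 \\ -x^{-1} & 1\end{smallmatrix}\right)\left(\begin{smallmatrix} 1 & x \\ 0 & 1\end{smallmatrix}\right) = \left(\begin{smallmatrix} 0 & x \\ -x^{-1} & 0\end{smallmatrix}\right) = \left(\begin{smallmatrix} x & 0 \\ 0 & x^{-1}\end{smallmatrix}\right)\left(\begin{smallmatrix} 0 & 1 \\ -1 & 0\end{smallmatrix}\right), \]
which pushes forward to $u_\alpha(x)u_{-\alpha}(-x^{-1})u_\alpha(x) = \alpha^\vee(x)\sigma_\alpha$; and since $\sigma_\alpha = \varphi_\alpha\!\left(\begin{smallmatrix} 0 & 1 \\ -1 & 0\end{smallmatrix}\right)$ inverts $\operatorname{im}(\alpha^\vee)$ and fixes the subtorus centralized by $Z_\alpha$, it acts on $T$ as $s_\alpha$ and so has image $s_\alpha$ in $W$. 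For (2) I would read off $\sigma_\alpha^2 = \varphi_\alpha(-I) = \alpha^\vee(-1)$ and, from $u_{-\alpha}(1)u_\alpha(-1)u_{-\alpha}(1) = \varphi_\alpha\!\left(\begin{smallmatrix} 0 & -1 \\ 1 & 0\end{smallmatrix}\right) = \sigma_\alpha^{-1}$, conclude $\sigma_{-\alpha} = \sigma_\alpha^{-1}$.

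\emph{Step 3 (part (4)) and the main obstacle.} If $(u'_\alpha)$ is a second family with property (1) of Proposition~\ref{properties}, then since $U_\alpha$ is unique the composite $u_\alpha^{-1}\circ u'_\alpha$ is an algebraic automorphism of $\mathbf{G}_a$ intertwining the two $T$-actions, both given by the character $\alpha$; this equivariance forces it to be linear (in positive characteristic it rules out the additive Frobenius twists, since $\alpha$ is a nontrivial character and $\alpha(t)^{p^i}=\alpha(t)$ cannot hold identically for $i>0$), so $u'_\alpha(x) = u_\alpha(c_\alpha x)$ for some $c_\alpha\in F^\times$. Imposing property (1) of Lemma~\ref{realization} on the new family gives $u_\alpha(c_\alpha)\,u_{-\alpha}(-c_{-\alpha})\,u_\alpha(c_\alpha)\in N$, and the uniqueness in (3) applied to $u = u_\alpha(c_\alpha)$ forces $-c_{-\alpha} = -c_\alpha^{-1}$, i.e.\ $c_\alpha c_{-\alpha}=1$. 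I expect the main obstacle to be Step~1: exhibiting $\varphi_\alpha$ with the stated normalizations and central kernel, so that membership in $N$ is detected in $\SL_2$, which rests on the rank-one structure theory; once that reduction is in hand the remaining claims are short $\SL_2$ matrix computations, and the compatibility of the choices for $\alpha$ and $-\alpha$ is exactly the content of the relation $\sigma_{-\alpha}=\sigma_\alpha^{-1}$ verified in (2).
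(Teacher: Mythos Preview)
The paper does not give its own proof of this lemma; it is quoted from Springer's textbook \cite[Lemma~8.1.4]{Spr98} as background, with no argument supplied. So there is nothing in the paper to compare against.

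That said, your proposal is correct and is essentially Springer's own argument: pass to the rank-one subgroup $\langle U_\alpha, U_{-\alpha}\rangle$, cover it by $\SL_2$ via $\varphi_\alpha$, and verify all claims by $2\times 2$ matrix computations, using the freedom to rescale $u_{\pm\alpha}$ to pin down the normalization in~(1). Your ordering (prove (3) first, then deduce (1), (2), and use uniqueness in (3) for (4)) is the natural one. One small remark on Step~3: the worry about Frobenius twists is unnecessary, since an algebraic-group \emph{automorphism} of $\mathbf{G}_a$ over an algebraically closed field is already forced to be a scaling $x\mapsto cx$ (Frobenius is a bijective endomorphism on points but not an isomorphism of varieties, as its inverse is not a morphism); your $T$-equivariance argument is valid but not needed. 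The point you flag as the ``main obstacle'' --- the existence of $\varphi_\alpha$ with central kernel --- is exactly the rank-one structure theorem (Springer, \S 7.2), and once granted the rest is indeed routine.
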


A family $(u_{\alpha})_{\alpha \in \Phi}$ with the properties (1) of Proposition \ref{properties} and Lemma \ref{realization} is called a \emph{realization} of the root system $\Phi$ in $G$ (see \cite[\S8.1]{Spr98}).

If $\Phi^+ \subset \Phi$ is a system of positive roots and $S$ is the associated set of simple reflections, we have:

\begin{proposition}\cite[Proposition 8.3.3]{Spr98}\label{braidrelations}
Let $\mu$ be a map of $S$ into a multiplicative monoid with the property: if $s,t \in S$, $s \neq t$, then 
$$\mu(s) \mu(t) \mu(s) \cdots = \mu(t) \mu(s) \mu(t) \cdots,$$
where in both sides the number of factors is $m(s,t)$.  Then there exists a unique extension of $\mu$ to $W$ such that if $s_1 \cdots s_h$ is a reduced decomposition for $w \in W$, we have $$\mu(w) = \mu(s_1) \cdots \mu(s_h).$$
\end{proposition}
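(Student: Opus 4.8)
The plan is to prove the two assertions in turn: first that the assignment extends to a section satisfying the braid relations, and then the Frobenius identity. Throughout I write $\dot{s}_\alpha = \sigma_\alpha$ for the Tits representative and set $n_\alpha := \alpha^\vee(\zeta)\sigma_\alpha$ for the proposed image of $s_\alpha$, and I abbreviate $t_\alpha = \alpha^\vee(\zeta)$.

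For the first assertion I would invoke Proposition \ref{braidrelations}, taking the multiplicative monoid to be the group $N$ and $\mu(s_\alpha) = n_\alpha$; everything then rests on checking the braid relation $n_\alpha n_\beta n_\alpha\cdots = n_\beta n_\alpha n_\beta\cdots$ (with $m(s_\alpha,s_\beta)$ factors on each side) for each pair of simple reflections. The key manipulation is to push every torus factor to the left using $\sigma_\gamma t = {}^{s_\gamma}t\,\sigma_\gamma$ together with ${}^{w}(\alpha^\vee(\zeta)) = (w\alpha)^\vee(\zeta)$. Writing one side of the relation as a reduced word $s_{i_1}\cdots s_{i_m}$ for the longest element $w_0$ of the rank-two subgroup $\langle s_\alpha, s_\beta\rangle$, this turns the product into $\tau\cdot(\sigma_{i_1}\cdots\sigma_{i_m})$, where $\tau = \prod_{k=1}^m \beta_k^\vee(\zeta)$ and $\beta_k = s_{i_1}\cdots s_{i_{k-1}}(\alpha_{i_k})$ runs over the inversion set of $w_0$. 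Since the Tits representatives themselves satisfy the braid relations (this is Tits' theorem, and is exactly what makes $\dot{w}$ well defined via Proposition \ref{braidrelations}), the two $\sigma$-products coincide, so the braid relation for the $n$'s reduces to the torus identity $\tau_{\mathrm{LHS}} = \tau_{\mathrm{RHS}}$. Here I would use that, because $\zeta$ is the \emph{same} scalar for every simple root, $\tau = \bigl(\sum_{k}\beta_k^\vee\bigr)(\zeta)$ depends only on the set $\{\beta_k\}$, which is the full set of positive roots of the subsystem and hence is independent of the chosen reduced word; therefore $\tau_{\mathrm{LHS}} = \tau_{\mathrm{RHS}}$ and the braid relation holds. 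This word-independence of the sum of inversion coroots is the crux of the first assertion, and it is exactly where the hypothesis that $\zeta$ is constant over $\Delta$ is used: a root-dependent $z_\alpha$ would require the finer analysis of \cite{Adr21}.

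For the Frobenius identity I would first record that the stated $\zeta$ satisfies $\phi(t_\alpha) = \alpha^\vee(-1)\,t_\alpha$: in the finite-field case $\phi(\alpha^\vee(\zeta)) = \alpha^\vee(\zeta^q) = \alpha^\vee(-\zeta)$ since $\zeta^{q-1} = -1$, and in the complex case $\phi(\alpha^\vee(\zeta)) = \alpha^\vee(\overline{\zeta}) = \alpha^\vee(-\zeta)$ since $\overline{\zeta} = -\zeta$ for a primitive fourth root of unity. Granting this and $\phi(\sigma_\alpha) = \sigma_\alpha$, a short computation using $\sigma_\alpha^2 = \alpha^\vee(-1)$ and $\sigma_\alpha^{-1} = \alpha^\vee(-1)\sigma_\alpha$ from Lemma \ref{realization}, together with $s_\alpha(\alpha^\vee(\zeta)) = \alpha^\vee(\zeta)^{-1}$, gives $\phi(\mathcal{S}(s_\alpha)) = \alpha^\vee(-1)\,t_\alpha\,\sigma_\alpha = \mathcal{S}(s_\alpha)^{-1}$ for every simple reflection.

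The final step promotes this generator-level identity to all involutions by a uniqueness argument, which is where the braid relations do the real work. I would introduce the two maps $A(w) := \phi(\mathcal{S}(w))$ and $B(w) := \mathcal{S}(w^{-1})^{-1}$ from $W$ to $N$. Since $\phi$ is a group homomorphism fixing each $\sigma_\alpha$ and acting trivially on $W$, the composite $A = \phi\circ\mathcal{S}$ is again a section obeying the reduced-word product formula, hence the braid-relation extension of its restriction to $S$; and reversing a reduced word shows $B(s_{i_1}\cdots s_{i_h}) = B(s_{i_1})\cdots B(s_{i_h})$, so $B$ is likewise the braid-relation extension of its restriction to $S$. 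By the previous paragraph $A$ and $B$ agree on every simple reflection, so the uniqueness clause of Proposition \ref{braidrelations} forces $A = B$ on all of $W$, that is, $\phi(\mathcal{S}(w)) = \mathcal{S}(w^{-1})^{-1}$ for every $w$. Specializing to an involution, where $w^{-1} = w$, yields $\phi(\mathcal{S}(w)) = \mathcal{S}(w)^{-1}$. I expect the main obstacle to lie entirely in the first assertion, namely verifying that the braid relations survive the torus twist, since once $\mathcal{S}$ is known to be a braid section the Frobenius identity follows formally from the single generator computation and the uniqueness of braid extensions.
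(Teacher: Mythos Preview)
Your proposal does not address the quoted statement at all: Proposition~\ref{braidrelations} is Springer's general extension principle, which the paper simply cites without proof. What you have written is a proof of Theorem~\ref{maintheorem}. I will assess it as such.

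Your argument is correct, and for both halves it differs from the paper's.

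\textbf{Braid relations.} The paper (Proposition~\ref{braid}) works case by case through $A_1\times A_1$, $A_2$, $B_2$, $G_2$, and in fact establishes the stronger result that $s_\alpha\mapsto\alpha^\vee(z_\alpha)\sigma_\alpha$ satisfies the braid relations for \emph{arbitrary} scalars $z_\alpha\in F^\times$, not just a common $\zeta$. Your argument, by contrast, is uniform: after pushing the torus factors left you observe that each side equals $\bigl(\sum_{\gamma\in\Phi^+_{\alpha,\beta}}\gamma^\vee\bigr)(\zeta)\cdot\dot w_0$, where $\Phi^+_{\alpha,\beta}$ is the positive system of the rank-two subgroup, and this visibly does not depend on the reduced word. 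This is slicker but genuinely uses the constancy of $\zeta$; your remark that variable $z_\alpha$ ``would require the finer analysis of \cite{Adr21}'' is too pessimistic, since the paper's case check already handles it, but for the theorem as stated your shortcut is perfectly adequate.

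\textbf{Frobenius identity.} Here the divergence is more interesting. The paper proceeds by induction on the length of an involution, using the Deodhar-type structural result (Proposition~\ref{involutions}) that every involution is built from shorter ones by either a commuting simple multiplication or a non-commuting simple conjugation; two separate computations then handle the inductive step. You instead define $A(w)=\phi(\mathcal S(w))$ and $B(w)=\mathcal S(w^{-1})^{-1}$, check that each is the braid-extension (in the sense of Proposition~\ref{braidrelations}) of its restriction to $S$, verify $A(s_\alpha)=B(s_\alpha)$ for every simple $\alpha$, and conclude $A=B$ on all of $W$ by uniqueness. This yields the stronger identity $\phi(\mathcal S(w))=\mathcal S(w^{-1})^{-1}$ for every $w\in W$, with the involution statement as a special case, and it completely sidesteps the combinatorics of involutions. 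The paper's route has the virtue of staying within the class of involutions and making the role of $\sigma_\alpha^2=\alpha^\vee(-1)$ explicit at each step; yours trades that for a cleaner global argument that exploits Proposition~\ref{braidrelations} twice.
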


We now fix a realization $(u_{\alpha})_{\alpha \in \Phi}$ of $\Phi$ in $G$.  Let $\alpha, \beta \in \Phi$ be linearly independent.  We denote $m(\alpha, \beta)$ the order of $s_{\alpha} s_{\beta}$.  Then $m(\alpha, \beta)$ equals one of the integers $2,3,4,6$.

\begin{proposition}\label{braid}\cite[Proposition 9.3.2]{Spr98}
Assume that $\alpha$ and $\beta$ are simple roots, relative to some system of positive roots.  Then
\[
\sigma_{\alpha} \sigma_{\beta} \sigma_{\alpha} \cdots = \sigma_{\beta} \sigma_{\alpha} \sigma_{\beta} \cdots,
\]
the number of factors on either side being $m(\alpha, \beta)$.
\end{proposition}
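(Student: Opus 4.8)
The plan is to show that the two products are equal not merely in $W$ but already in $N$. Write $L := \sigma_\alpha\sigma_\beta\sigma_\alpha\cdots$ and $R := \sigma_\beta\sigma_\alpha\sigma_\beta\cdots$, each with $m := m(\alpha,\beta)$ factors. Since the simple reflections already satisfy the braid relations inside the Coxeter group $W$, both $L$ and $R$ lie in $N$ and have the same image in $W$, namely the longest element $w$ of the dihedral subgroup $\langle s_\alpha, s_\beta\rangle$. Hence $t := L R^{-1} \in T$, and the entire content of the statement is the assertion that $t = 1$. Moreover $t$ is $2$-torsion: the kernel of $\langle \sigma_\alpha, \sigma_\beta\rangle \to W$ is generated by the involutions $\gamma^\vee(-1)$ for $\gamma \in \Phi$ (using $\sigma_\alpha^2 = \alpha^\vee(-1)$ from Lemma \ref{realization} and its conjugates), an elementary abelian $2$-group, so $t \in T[2]$.

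Next I would reduce to semisimple rank $2$. All of $\sigma_\alpha, \sigma_\beta$, and hence $L, R, t$, lie in the subgroup $G' = \langle T, \, U_\gamma : \gamma \in \Phi'\rangle$, where $\Phi' = \Phi \cap (\mathbf{Z}\alpha + \mathbf{Z}\beta)$; this $G'$ is reductive with root system $\Phi'$ and the same maximal torus, and the given realization restricts to a realization of $\Phi'$ in $G'$, so the Tits representatives are unchanged. Thus I may assume $\Phi$ has rank $2$, with $m \in \{2,3,4,6\}$ and $\Phi'$ of type $A_1 \times A_1$, $A_2$, $B_2$, or $G_2$.

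To pin down $t$ I would compare the conjugation actions of $L$ and $R$ on the root subgroups. The elementary input is the rank-$1$ identity, computed inside $\langle U_\alpha, U_{-\alpha}\rangle$, that $\sigma_\alpha u_{\pm\alpha}(x)\sigma_\alpha^{-1} = u_{\mp\alpha}(-x)$, together with the fact that conjugation by any $\sigma_\gamma$ carries $U_\delta$ onto $U_{s_\gamma\delta}$ by a parameter map $u_\delta(x)\mapsto u_{s_\gamma\delta}(\pm x)$. Propagating these through the word by means of the Chevalley commutator relations, I would compute, for each root $\gamma$, the scalars $c_L(\gamma), c_R(\gamma)$ defined by $L u_\gamma(x) L^{-1} = u_{w\gamma}(c_L(\gamma)x)$ and similarly for $R$, and check that $c_L(\gamma) = c_R(\gamma)$. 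Since $t\, u_{w\gamma}(y)\, t^{-1} = u_{w\gamma}\big((w\gamma)(t)\,y\big)$, this equality forces $(w\gamma)(t) = 1$ for all $\gamma$, i.e. $\gamma(t) = 1$ for every root, so $t$ lies in the center $Z(G')$. Combined with $t \in T[2]$, this already gives $t = 1$ in types $A_1\times A_1$, $A_2$ and $G_2$, whose centers have no $2$-torsion (indeed $A_1\times A_1$ is immediate, the two commuting $\SL_2$'s giving $\sigma_\alpha\sigma_\beta = \sigma_\beta\sigma_\alpha$ outright).

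The one remaining case, $B_2$, is where the main obstacle lies: there $Z(G')[2] \cong \mu_2$, and a central element acts trivially by conjugation, so the method above cannot distinguish $t$ from the nontrivial central involution. I would settle this by a direct matrix computation in the $4$-dimensional symplectic representation of $\Sp_4$: write out $\sigma_\alpha, \sigma_\beta$ explicitly from $\sigma_\alpha = u_\alpha(1)u_{-\alpha}(-1)u_\alpha(1)$ and verify $\sigma_\alpha\sigma_\beta\sigma_\alpha\sigma_\beta = \sigma_\beta\sigma_\alpha\sigma_\beta\sigma_\alpha$ on the nose. The genuine difficulty throughout is thus the rank-$2$ sign bookkeeping — most laborious for $G_2$ ($m=6$), where the structure constants along the long word must be tracked carefully — together with the recognition that the central $2$-torsion in type $B_2$ is invisible to the conjugation argument and so requires this separate explicit check.
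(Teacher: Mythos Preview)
The paper does not prove this proposition at all; it is quoted verbatim from \cite[Proposition~9.3.2]{Spr98} as a preliminary fact, with no argument supplied. The only hint the paper gives about the method is in the proof of its own Proposition in \S\ref{sections} (the one asserting that $s_\alpha\mapsto\alpha^\vee(z_\alpha)\sigma_\alpha$ extends to a braid-compatible section), which begins ``As in the proof of \cite[Proposition~9.3.2]{Spr98}, we need only check the cases $A_1\times A_1$, $A_2$, $B_2$, and $G_2$'' and then carries out a direct verification of \eqref{mainequation} in each rank-$2$ type.

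Your approach is correct in outline and genuinely different in flavor. You share the reduction to rank~$2$, but instead of multiplying out both words you invoke the Tits-group fact that $t=LR^{-1}$ lies in the elementary abelian $2$-group generated by the $\gamma^\vee(-1)$, then force $t$ into the center via conjugation on root groups, and observe that the center of the simply connected rank-$2$ group has no $2$-torsion except in type $B_2$. This is a clean conceptual reduction. Two remarks, though. First, your step ``check that $c_L(\gamma)=c_R(\gamma)$'' is itself a sign-tracking computation through the braid word, of essentially the same weight as Springer's direct verification; the structural wrapper does not eliminate it, only repackages it. Second, be careful with the reduction: your $G'$ carries the full torus $T$, so $Z(G')$ can be large; what actually pins $t$ down is that it lies in the subgroup generated by $\alpha^\vee(-1),\beta^\vee(-1)$, and one checks on the Cartan integers that a central element of that form is trivial in types $A_2$ and $G_2$ (and is at worst $\alpha^\vee(-1)$ in $B_2$). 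With those points made precise, your argument goes through; compared to the bare case-by-case computation it buys a uniform explanation for three of the four types at the cost of an ad hoc $\Sp_4$ check.
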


Following \cite[\S9.3.3]{Spr98}, we fix a set of positive roots $\Phi^+ \subset \Phi$, and let $\Delta$ be the associated set of simple roots.  Let $w = s_{\alpha_1} \cdots s_{\alpha_h}$ be a reduced expression for $w \in W$, with $\alpha_1, ..., \alpha_h \in \Delta$.  The element $\mathcal{N}_{\circ}(w) := \sigma_{\alpha_1} \cdots \sigma_{\alpha_h}$ is independent of the choice of reduced expression of $w$.  We therefore obtain a section $\mathcal{N}_{\circ} : W \rightarrow N$ of the homomorphism $N \rightarrow W$.  This is the section of Tits \cite{Tit66}. 

\section{The section $\mathcal{S}$}\label{sections}

By Proposition \ref{braidrelations}, any section $\mathcal{S}$ of $W$ satisfying the braid relations is determined by its values on a set of simple reflections. Let us write $\mathcal{S}(s_{\alpha}) = t_{\alpha} \sigma_{\alpha}$ for some $t_{\alpha} \in T$.  

Let $\alpha, \beta \in \Delta$.  In order that $t_{\alpha} \sigma_{\alpha}$ and $t_{\beta} \sigma_{\beta}$ satisfy the braid relations, it is necessary and sufficient that
\begin{equation}\label{prelimequation}
t_{\alpha} \sigma_{\alpha} t_{\beta} \sigma_{\beta} t_{\alpha} \sigma_{\alpha} \cdots = t_{\beta} \sigma_{\beta} t_{\alpha} \sigma_{\alpha} t_{\beta} \sigma_{\beta} \cdots,
\end{equation}
where in both sides the number of factors is $m(\alpha, \beta)$.  As $\sigma_{\alpha} t_{\beta} \sigma_{\alpha}^{-1} = s_{\alpha}(t_{\beta})$, and since the $\sigma$ satisfy the braid relations, \eqref{prelimequation} is equivalent to
\begin{equation}\label{mainequation}
t_{\alpha} s_{\alpha}(t_{\beta}) s_{\alpha} s_{\beta}(t_{\alpha}) \cdots = t_{\beta} s_{\beta}(t_{\alpha}) s_{\beta} s_{\alpha}(t_{\beta}) \cdots.
\end{equation}

\begin{proposition}\label{braid}
Choose any $z_\alpha \in F^{\times}$, for $\alpha \in \Delta$.  Then the map $s_{\alpha} \mapsto \alpha^{\vee}(z_\alpha) \sigma_{\alpha}$ extends to a section $\mathcal{S} : W \rightarrow N$ which satisfies the braid relations.
\end{proposition}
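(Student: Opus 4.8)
The plan is to verify equation \eqref{mainequation} directly for the torus elements $t_\alpha = \alpha^\vee(z_\alpha)$, using Proposition \ref{braidrelations} to conclude that this braid compatibility on simple reflections suffices for the existence of the section $\mathcal{S}$. The key observation is that each $t_\alpha$ lies in the one-parameter subgroup $\alpha^\vee(F^\times)$, and the Weyl group acts on such elements by a clean formula: for $\alpha, \beta \in \Delta$ and any $w \in W$, we have $w\bigl(\beta^\vee(z)\bigr) = \bigl(w\beta^\vee\bigr)(z)$, where $w\beta^\vee$ is the coweight obtained by applying $w$ to $\beta^\vee$ in $X_*(T)$. Thus every factor appearing on either side of \eqref{mainequation} is of the form $\gamma^\vee(z_\bullet)$ for various coroots (or integer combinations of coroots) $\gamma^\vee$, and since $T$ is abelian the whole expression on each side can be collected into a single element $\mu^\pm(z_{\alpha_1}, \dots)$, where $\mu^\pm$ is an explicit element of $X_*(T)$ built from the coroots $\alpha^\vee, \beta^\vee$.

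First I would fix $\alpha, \beta \in \Delta$, set $m = m(\alpha,\beta) \in \{2,3,4,6\}$, and write out both sides of \eqref{mainequation} explicitly. On the left the factors are $t_\alpha,\ s_\alpha(t_\beta),\ s_\alpha s_\beta(t_\alpha),\ s_\alpha s_\beta s_\alpha(t_\beta), \dots$ (alternating $t_\alpha, t_\beta$ with prefixes that are the initial segments of the length-$m$ alternating word), and symmetrically on the right with the roles of $\alpha$ and $\beta$ swapped. Using $t_\alpha = \alpha^\vee(z_\alpha)$, $t_\beta = \beta^\vee(z_\beta)$ and the action formula above, the left side becomes $\bigl(\sum_{i \text{ even}} v_i \alpha^\vee\bigr)(z_\alpha) \cdot \bigl(\sum_{i \text{ odd}} v_i \beta^\vee \bigr)(z_\beta)$ for appropriate prefix elements $v_i$ of the dihedral group, and similarly for the right side. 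So it remains to check two coweight identities: that the coefficient of $z_\alpha$ matches on both sides, i.e. $\sum_{i\text{ even}, \text{LHS}} v_i\alpha^\vee = \sum_{i\text{ odd},\text{RHS}} v_i'\alpha^\vee$ in $X_*(T)$, and likewise for $z_\beta$.

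The cleanest way to see these two identities hold is to note that they are exactly the statements obtained by specializing: the identity \eqref{mainequation} is already known to hold when $t_\alpha = \alpha^\vee(x)$ and $t_\beta = \beta^\vee(y)$ \emph{for the particular choice making $t_\alpha\sigma_\alpha$ a genuine reflection representative} — but more to the point, Proposition \ref{braid} (the $\sigma$-braid relation, \cite[Proposition 9.3.2]{Spr98}) together with Lemma \ref{realization}(1), which gives $u_\alpha(x)u_{-\alpha}(-x^{-1})u_\alpha(x) = \alpha^\vee(x)\sigma_\alpha$, shows that the assignment $s_\alpha \mapsto \alpha^\vee(x_\alpha)\sigma_\alpha$ already satisfies the braid relations for the \emph{specific} values $x_\alpha$ arising from rescaling a realization. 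Rescaling the realization $u_\alpha \rightsquigarrow u_\alpha(c_\alpha \,\cdot\,)$ (allowed by Lemma \ref{realization}(4), with $c_\alpha c_{-\alpha}=1$) multiplies $\sigma_\alpha$ by $\alpha^\vee(c_\alpha)$; since the $c_\alpha$ for $\alpha \in \Delta$ are arbitrary elements of $F^\times$, this already produces the section $s_\alpha \mapsto \alpha^\vee(c_\alpha)\sigma_\alpha$ for all independent choices of $c_\alpha \in F^\times$, and such a section automatically satisfies the braid relations because it is $\mathcal{N}_\circ$ computed in a different realization. Choosing $c_\alpha = z_\alpha$ gives the claim.

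I expect the main obstacle — really the only point needing care — to be the interchange between "reduced word in $W$" and "alternating product of $\sigma$'s of length $m$": one must make sure the independence-of-reduced-expression statement (\cite[\S9.3.3]{Spr98}, recalled in the excerpt for $\mathcal{N}_\circ$) is being invoked correctly for the rescaled realization, i.e. that the rescaled family $(u_\alpha(c_\alpha\,\cdot\,))_{\alpha\in\Phi}$ is genuinely a realization in the sense of Lemma \ref{realization}, so that its own $\mathcal{N}_\circ$ is well-defined and equals $w \mapsto \prod \alpha_i^\vee(c_{\alpha_i})\sigma_{\alpha_i}$ on a reduced word. Once that is pinned down, Proposition \ref{braidrelations} immediately upgrades the braid compatibility on $S$ to a well-defined section $\mathcal{S}$ on all of $W$, and uniqueness of the extension is part of that proposition. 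An alternative, more self-contained route avoiding the realization-rescaling argument is to prove the two coweight identities in the previous paragraph by a short direct induction on $m$ using $s_\alpha(\beta^\vee) = \beta^\vee - \langle \alpha, \beta^\vee\rangle\alpha^\vee$ and the dihedral relations; I would include whichever is shorter in the final writeup.
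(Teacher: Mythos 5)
Your proposal is correct, but your preferred route is genuinely different from the paper's. The paper proves the proposition by brute force: it reduces (as in Springer's proof of Proposition 9.3.2) to the four rank-two cases $A_1 \times A_1$, $A_2$, $B_2$, $G_2$ and verifies the identity \eqref{mainequation} in each case by explicitly expanding every factor as $\gamma^{\vee}(z_\bullet)$ for suitable coroots $\gamma^{\vee}$ and collecting terms --- i.e., it carries out exactly the ``alternative, more self-contained route'' you mention at the end. Your main argument instead rescales the realization: taking $c_\alpha = z_\alpha$, $c_{-\alpha} = z_\alpha^{-1}$ for $\alpha \in \Delta$ (and, say, $c_\gamma = 1$ elsewhere), the family $u'_\alpha(x) = u_\alpha(c_\alpha x)$ is again a realization, its Tits representatives are $\sigma'_\alpha = \alpha^{\vee}(z_\alpha)\sigma_\alpha$ by Lemma \ref{realization}(1), and Springer's braid relation for the Tits elements of \emph{any} realization then gives the claim with no case analysis at all. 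The one point you correctly flag --- that an arbitrary choice of constants with $c_\alpha c_{-\alpha}=1$ really does yield a realization, which is the converse direction of Lemma \ref{realization}(4) as quoted --- is a one-line verification of the two defining properties, so there is no gap. Your approach buys uniformity and brevity; the paper's buys explicitness (the intermediate coroot identities it derives are visible and reusable), but is longer and case-dependent. Either writeup would be acceptable; if you go with the rescaling argument, do include the short check that the rescaled family is a realization rather than citing Lemma \ref{realization}(4) for it.
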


\begin{proof}
As in the proof of \cite[Proposition 9.3.2]{Spr98}, we need only check the cases $A_1 \times A_1, A_2, B_2$, and $G_2$.  

$A_1 \times A_1$: We compute $$\alpha^{\vee}(z_{\alpha}) s_{\alpha}(\beta^{\vee}(z_{\beta})) = \alpha^{\vee}(z_{\alpha}) \beta^{\vee}(z_{\beta}) = \beta^{\vee}(z_{\beta}) s_{\beta}(\alpha^{\vee}(z_{\alpha}))$$ since $s_\alpha, s_\beta$ commute.

$A_2$: We compute 
\[
t_{\alpha} s_{\alpha}(t_{\beta}) s_{\alpha} s_{\beta} (t_{\alpha})
\]
\[
= \alpha^{\vee}(z_{\alpha}) s_{\alpha}(\beta^{\vee}(z_{\beta})) s_{\alpha} s_{\beta}(\alpha^{\vee}(z_{\alpha})) = \alpha^{\vee}(z_{\alpha}) (\alpha + \beta)^{\vee}(z_{\beta}) s_{\alpha} (\alpha + \beta)^{\vee}(z_{\alpha})
\]
\[
= \alpha^{\vee}(z_{\alpha}) (\alpha + \beta)^{\vee}(z_{\beta})  \beta^{\vee}(z_{\alpha}) =  \alpha^{\vee}(z_{\alpha}) \alpha^{\vee}(z_{\beta})  \beta^{\vee}(z_{\alpha}) \beta^{\vee}(z_{\beta})
\]
since $s_{\alpha}(\beta^{\vee}) = s_{\beta}(\alpha^{\vee}) = (\alpha + \beta)^{\vee} = \alpha^{\vee} + \beta^{\vee}$.  One computes the same result for $t_{\beta} s_{\beta}(t_{\alpha}) s_{\beta} s_{\alpha} (t_{\beta})$.

$B_2$:
Let $\alpha$ be the short root, $\beta$ the long root.  We have $s_{\alpha}(\beta^{\vee}) = \alpha^{\vee} + \beta^{\vee} = (2 \alpha + \beta)^{\vee}$ and $s_{\beta}(\alpha^{\vee}) = \alpha^{\vee} + 2 \beta^{\vee} = (\alpha + \beta)^{\vee}$.  Thus, 

\[
t_{\alpha} s_{\alpha}(t_{\beta}) s_{\alpha} s_{\beta} (t_{\alpha}) s_{\alpha} s_{\beta} s_{\alpha} (t_{\beta})
\]
\[
= \alpha^{\vee}(z_{\alpha}) s_{\alpha}(\beta^{\vee}(z_{\beta})) s_{\alpha} s_{\beta}(\alpha^{\vee}(z_{\alpha})) s_{\alpha} s_{\beta} s_{\alpha}(\beta^{\vee}(z_{\beta})) 
\]
\[
= \alpha^{\vee}(z_{\alpha}) (2 \alpha + \beta)^{\vee}(z_{\beta}) s_{\alpha} ((\alpha + \beta)^{\vee} (z_{\alpha})) s_{\alpha} s_{\beta} ((2 \alpha + \beta)^{\vee}(z_{\beta})) 
\]
\[
= \alpha^{\vee}(z_{\alpha}) (2 \alpha + \beta)^{\vee}(z_{\beta}) (\alpha + \beta)^{\vee} (z_{\alpha}) \beta^{\vee}(z_{\beta}).
\]

One may now compute the same result for $t_{\beta} s_{\beta}(t_{\alpha}) s_{\beta} s_{\alpha} (t_{\beta}) s_{\beta} s_{\alpha} s_{\beta} (t_{\alpha})$.

$G_2$:
Let $\alpha$ be the short root, $\beta$ the long root.  We have $s_{\alpha}(\beta^{\vee}) = \alpha^{\vee} + \beta^{\vee} = (3 \alpha + \beta)^{\vee}$ and $s_{\beta}(\alpha^{\vee}) = \alpha^{\vee} + 3 \beta^{\vee} = (\alpha + \beta)^{\vee}.$  Thus, 

\[
t_{\alpha} s_{\alpha}(t_{\beta}) s_{\alpha} s_{\beta} (t_{\alpha}) s_{\alpha} s_{\beta} s_{\alpha} (t_{\beta}) s_{\alpha} s_{\beta} s_{\alpha} s_{\beta}(t_{\alpha}) s_{\alpha} s_{\beta} s_{\alpha} s_{\beta} s_{\alpha}(t_{\beta})
\]
\[
= \alpha^{\vee}(z_{\alpha}) s_{\alpha}(\beta^{\vee}(z_{\beta})) s_{\alpha} s_{\beta}(\alpha^{\vee}(z_{\alpha})) s_{\alpha} s_{\beta} s_{\alpha}(\beta^{\vee}(z_{\beta})) s_{\alpha} s_{\beta} s_{\alpha} s_{\beta}(\alpha^{\vee}(z_{\alpha})) s_{\alpha} s_{\beta} s_{\alpha} s_{\beta} s_{\alpha}(\beta^{\vee}(z_{\beta}))
\]
\[
= \alpha^{\vee}(z_{\alpha}) (\alpha^{\vee}+\beta^{\vee})(z_{\beta}) (2 \alpha^{\vee} + 3 \beta^{\vee})(z_{\alpha}) (\alpha^{\vee} + 2 \beta^{\vee})(z_{\beta}) (\alpha^{\vee} + 3 \beta^{\vee})(z_{\alpha}) (\beta^{\vee})(z_{\beta}).
\]

One may now compute the same result for $t_{\beta} s_{\beta}(t_{\alpha}) s_{\beta} s_{\alpha} (t_{\beta}) s_{\beta} s_{\alpha} s_{\beta} (t_{\alpha}) s_{\beta} s_{\alpha} s_{\beta} s_{\alpha}(t_{\beta}) s_{\beta} s_{\alpha} s_{\beta} s_{\alpha} s_{\beta}(t_{\alpha}).$

\end{proof}

We need the following result about involutions, see \cite[Theorem 5.4]{Deo82}.

\begin{proposition}\label{involutions}
Any involution $w \in W$ can be obtained starting from the involution $e$ by a sequence of length-increasing operations that are either multiplication of an involution by a simple reflection $s_{\alpha}$ with which it commutes, or conjugation by a simple reflection $s_{\alpha}$ with which it does not commute.
\end{proposition}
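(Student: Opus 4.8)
The plan is to prove the proposition by induction on the length $\ell(w)$, by running the asserted chain of operations in reverse: I will show that every involution $w \neq e$ is obtained from some strictly shorter involution by exactly one of the two permitted length-increasing moves, and then iterate down to $e$. The base case $\ell(w) = 0$, i.e.\ $w = e$, is vacuous.

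For the inductive step, fix an involution $w$ with $\ell(w) > 0$ and choose a simple reflection $s = s_\alpha$ with $\ell(sw) < \ell(w)$; such an $s$ exists since $w \neq e$, and then $\ell(sw) = \ell(w) - 1$. Because $w = w^{-1}$, inverting gives $\ell(ws) = \ell(sw) < \ell(w)$ as well, so $s$ is simultaneously a left and a right descent of $w$. Now distinguish two cases according to whether $s$ commutes with $w$. If $sw = ws$, then $sw$ is again an involution, $\ell(sw) = \ell(w) - 1$, and $sw$ commutes with $s$ (indeed $(sw)s = (ws)s = w$); applying the inductive hypothesis to $sw$ and then performing the move ``multiply the involution $sw$ by the commuting simple reflection $s$'' recovers $w$.

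The substantive case is $sw \neq ws$, in which I claim $\ell(sws) = \ell(w) - 2$. Pick a reduced expression $w = s\, s_2 \cdots s_k$, which is possible because $s$ is a left descent. Since $s$ is also a right descent, the exchange property yields an index $i$ with $ws = s\, s_2 \cdots \widehat{s_i} \cdots s_k$; the case $i = 1$ would give $ws = s_2 \cdots s_k = sw$, contradicting $sw \neq ws$, so $i \geq 2$. Then $w = s\, s_2 \cdots \widehat{s_i} \cdots s_k\, s$, hence $sws = s_2 \cdots \widehat{s_i} \cdots s_k$ is an expression in $k - 2$ factors and $\ell(sws) \leq \ell(w) - 2$; the reverse inequality is automatic from $w = s(sws)s$. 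Thus $sws$ is an involution with $\ell(sws) = \ell(w) - 2$, and it does not commute with $s$ (otherwise $w = s(sws)s = sws$ would force $sw = ws$); applying the inductive hypothesis to $sws$ and then performing the move ``conjugate the involution $sws$ by the non-commuting simple reflection $s$'' recovers $w$. This closes the induction.

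The main obstacle is precisely the case $sw \neq ws$: a priori conjugation of $w$ by the reflection $s$ could change the length by $0$ rather than by $-2$, and one must exclude this. The device that does it is to use that $s$ is at once a left and a right descent of the \emph{involution} $w$, feed a reduced word for $w$ into the exchange property, and discard the degenerate deletion index $i = 1$ by invoking $sw \neq ws$. The remaining verifications --- that each element produced along the way is an involution, and that the stated commutation or non-commutation with $s$ holds --- are immediate one-line computations.
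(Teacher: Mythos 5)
Your proof is correct and follows essentially the same route as the paper's: induction on $\ell(w)$, splitting on whether the descent $s_\alpha$ commutes with $w$, and in the non-commuting case using the exchange condition on a reduced word beginning with $s_\alpha$ to rule out deleting the initial letter and thereby force $\ell(s_\alpha w s_\alpha) = \ell(w) - 2$. The only differences are cosmetic (you spell out the $i \geq 2$ bookkeeping and the verification that the shorter elements are involutions with the stated commutation behavior slightly more explicitly).
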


\begin{proof}
By induction on the length $\ell(w)$. If $\ell(w) = 0$, then $w = e$.  So suppose that $\ell(w) > 0$, and let $\alpha$ be a simple root such that $\ell(ws_{\alpha}) < \ell(w)$.  Distinguish the cases on whether or not $s_{\alpha}$ commutes with $w$.  If it commutes, then $w s_{\alpha}$ is an involution, and $w$ is obtained from it by commuting multiplication by $s_{\alpha}$.  If they don't commute, then we can see as follows that $\ell(s_{\alpha} w s_{\alpha}) = \ell(w) - 2$.  From $\ell(ws_{\alpha}) < \ell(w)$, there is a reduced expression for $w$ that ends with $s_{\alpha}$, and reversing it we obtain a reduced expression for $w^{-1} = w$ starting with $s_{\alpha}$, say $s_{\alpha} s_2 \cdots s_{\ell(w)}$.  Now the exchange condition says that an expression for $w s_{\alpha}$ can be obtained by striking out one of the generators in the latter reduced expression, and it cannot be the initial $s_{\alpha}$ as that would give $s_{\alpha} w$ which is supposed to differ from $w s_{\alpha}$. Now left-multiplying by $s_{\alpha}$ gives an expression for $s_{\alpha} w s_{\alpha}$ obtained by strikig out a generator in $s_2 \cdots s_{\ell(w)}$, and therefore of length $\ell(w) - 2$.  This $s_{\alpha} w s_{\alpha}$ is an involution of shorter length than $w$, from which $w$ can be obtained by conjugation by $s_{\alpha}$ with which it does not commute.
\end{proof}

\begin{proposition}\label{condition}
Choose any $z_\alpha \in F^{\times}$, for $\alpha \in \Delta$.   Suppose that $t_{\alpha} = \alpha^{\vee}(z_{\alpha})$, for $\alpha \in \Delta$.  If $\phi(t_{\alpha}) = \alpha^{\vee}(-1) t_{\alpha}$ for all $\alpha \in \Delta$, then $\phi(\mathcal{S}(w)) = \mathcal{S}(w)^{-1}$ for any involution $w$ in $W$.
\end{proposition}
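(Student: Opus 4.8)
The plan is to prove Proposition~\ref{condition} by induction on $\ell(w)$, using the structural description of involutions furnished by Proposition~\ref{involutions}. The base case $w = e$ is immediate since $\mathcal{S}(e) = 1$. For the inductive step, suppose the claim holds for all involutions of length smaller than $\ell(w)$, and write $w$ in one of the two forms provided by Proposition~\ref{involutions}: either $w = w' s_\alpha$ with $w'$ an involution commuting with $s_\alpha$ and $\ell(w) = \ell(w') + 1$, or $w = s_\alpha w' s_\alpha$ with $w'$ an involution not commuting with $s_\alpha$ and $\ell(w) = \ell(w') + 2$. In either case $\mathcal{S}(w)$ can be computed from $\mathcal{S}(w')$ and $\mathcal{S}(s_\alpha) = t_\alpha \sigma_\alpha$ by multiplicativity along reduced expressions, which is legitimate precisely because $\mathcal{S}$ satisfies the braid relations (Proposition~\ref{braid}) — this is the point where the braid relations do the heavy lifting, since it lets me concatenate reduced words without worrying about which one I chose.

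The first main computational step is to understand how $\phi$ interacts with $\mathcal{S}(s_\alpha)$. Since $\phi(\sigma_\alpha) = \sigma_\alpha$ by the standing assumption on $\phi$, and $\phi(\alpha^\vee(z)) = \alpha^\vee(\phi(z))$, the hypothesis $\phi(t_\alpha) = \alpha^\vee(-1) t_\alpha$ gives $\phi(\mathcal{S}(s_\alpha)) = \alpha^\vee(-1) t_\alpha \sigma_\alpha = \alpha^\vee(-1)\, \mathcal{S}(s_\alpha)$. On the other hand, using $\sigma_\alpha^2 = \alpha^\vee(-1)$ from Lemma~\ref{realization}(2) and that $t_\alpha \in T$ commutes appropriately, one computes $\mathcal{S}(s_\alpha)^{-1} = \sigma_\alpha^{-1} t_\alpha^{-1} = \sigma_\alpha^{-1} t_\alpha^{-1} \sigma_\alpha \sigma_\alpha^{-1} = s_\alpha(t_\alpha^{-1})\sigma_\alpha^{-1}$, and since $t_\alpha = \alpha^\vee(z_\alpha)$ satisfies $s_\alpha(t_\alpha) = \alpha^\vee(-1)\, t_\alpha$ (because $s_\alpha(\alpha^\vee) = -\alpha^\vee$), a short manipulation shows $\mathcal{S}(s_\alpha)^{-1} = \alpha^\vee(-1)\,\mathcal{S}(s_\alpha) = \phi(\mathcal{S}(s_\alpha))$. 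So the base relation $\phi(\mathcal{S}(s_\alpha)) = \mathcal{S}(s_\alpha)^{-1}$ holds for each simple reflection; note this already uses that $s_\alpha$ is an involution.

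For the inductive step in the commuting case $w = w' s_\alpha$: because $w'$ and $s_\alpha$ commute and the lengths add, I expect $\mathcal{S}(w) = \mathcal{S}(w')\mathcal{S}(s_\alpha) = \mathcal{S}(s_\alpha)\mathcal{S}(w')$, so $\phi(\mathcal{S}(w)) = \phi(\mathcal{S}(w'))\phi(\mathcal{S}(s_\alpha)) = \mathcal{S}(w')^{-1}\mathcal{S}(s_\alpha)^{-1} = (\mathcal{S}(s_\alpha)\mathcal{S}(w'))^{-1} = \mathcal{S}(w)^{-1}$, where I used the inductive hypothesis, the simple-reflection case, and the commutation of the two factors lifted to $N$. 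The one subtlety to check is that the lifts $\mathcal{S}(w')$ and $\mathcal{S}(s_\alpha)$ actually commute in $N$ (not just their images in $W$); this should follow from multiplicativity along a reduced expression for $w = w's_\alpha = s_\alpha w'$, which gives both $\mathcal{S}(w')\mathcal{S}(s_\alpha)$ and $\mathcal{S}(s_\alpha)\mathcal{S}(w')$ as the value $\mathcal{S}(w)$.

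For the conjugation case $w = s_\alpha w' s_\alpha$ with $\ell(w) = \ell(w') + 2$: here $\mathcal{S}(w) = \mathcal{S}(s_\alpha)\mathcal{S}(w')\mathcal{S}(s_\alpha)$ by multiplicativity along a reduced word, so $\phi(\mathcal{S}(w)) = \phi(\mathcal{S}(s_\alpha))\phi(\mathcal{S}(w'))\phi(\mathcal{S}(s_\alpha)) = \mathcal{S}(s_\alpha)^{-1}\mathcal{S}(w')^{-1}\mathcal{S}(s_\alpha)^{-1} = \big(\mathcal{S}(s_\alpha)\mathcal{S}(w')\mathcal{S}(s_\alpha)\big)^{-1} = \mathcal{S}(w)^{-1}$, again using the inductive hypothesis and the simple-reflection case, and nothing more than reversing the order of a product of three elements. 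I expect the main obstacle to be the bookkeeping in establishing that $\mathcal{S}$ is genuinely multiplicative along reduced expressions built by concatenation in these two operations — i.e. that $\ell(w) = \ell(w') + 1$ (resp.\ $+2$) lets me write a reduced expression for $w$ as the concatenation of one for $w'$ with (one or two copies of) $s_\alpha$, so that Proposition~\ref{braidrelations} applies — but this is exactly the content of Proposition~\ref{involutions} combined with the defining multiplicative property of a section satisfying the braid relations, so it should go through cleanly.
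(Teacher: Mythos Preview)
Your approach is essentially the same as the paper's: induct on $\ell(w)$ via Proposition~\ref{involutions}, handle the commuting and conjugating cases separately, and use multiplicativity of $\mathcal{S}$ along reduced expressions (Proposition~\ref{braid}). The paper computes $\phi(\mathcal{S}(w))\mathcal{S}(w)=1$ directly in each case, while you first isolate the identity $\phi(\mathcal{S}(s_\alpha))=\mathcal{S}(s_\alpha)^{-1}$ and then reduce both inductive steps to reversing a product; this is a cosmetic difference, and your observation that $\mathcal{S}(w')$ and $\mathcal{S}(s_\alpha)$ genuinely commute in $N$ (because both orderings give $\mathcal{S}(w)$) is exactly what the paper uses implicitly.

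One slip to fix: you write ``$s_\alpha(t_\alpha)=\alpha^\vee(-1)\,t_\alpha$ (because $s_\alpha(\alpha^\vee)=-\alpha^\vee$)'', but $s_\alpha(\alpha^\vee)=-\alpha^\vee$ gives $s_\alpha(\alpha^\vee(z_\alpha))=\alpha^\vee(z_\alpha^{-1})=t_\alpha^{-1}$, not $\alpha^\vee(-1)t_\alpha$ (the sign is in the exponent of the cocharacter, not multiplicatively on $z_\alpha$). With the correct formula your target identity still holds: $\mathcal{S}(s_\alpha)^{-1}=s_\alpha(t_\alpha^{-1})\,\sigma_\alpha^{-1}=t_\alpha\,\sigma_\alpha^{-1}=t_\alpha\,\alpha^\vee(-1)\,\sigma_\alpha=\alpha^\vee(-1)\,\mathcal{S}(s_\alpha)=\phi(\mathcal{S}(s_\alpha))$, using $\sigma_\alpha^{-1}=\alpha^\vee(-1)\sigma_\alpha$ from Lemma~\ref{realization}(2). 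So the conclusion and the rest of the argument are unaffected.
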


\begin{proof}
We induct using Proposition \ref{involutions}.  First we let $w = s_{\alpha}$, a simple reflection.  Then $\phi(\mathcal{S}(w)) \mathcal{S}(w) = \phi(t_{\alpha} \sigma_{\alpha}) t_{\alpha} \sigma_{\alpha} = \phi(t_{\alpha}) \sigma_{\alpha} t_{\alpha} \sigma_{\alpha} = \phi(t_{\alpha}) t_{\alpha}^{-1} \alpha^{\vee}(-1) = 1$, since $\phi(\dot{w}) = \dot{w} \ \forall w \in W$ and since $\sigma_{\alpha} t_{\alpha} \sigma_{\alpha}^{-1} = t_{\alpha}^{-1}$.

Now suppose $w \in W$ is an involution satisfying $\phi(\mathcal{S}(w)) = \mathcal{S}(w)^{-1}$.  We need to show firstly that $\phi(\mathcal{S}(s_{\alpha} w)) = \mathcal{S}(s_{\alpha} w)^{-1}$ for any $\alpha \in \Delta$ such that $s_{\alpha}$ commutes with $w$ and $\ell(s_{\alpha} w) > \ell(w)$, and secondly that $\phi(\mathcal{S}(s_{\alpha} w s_{\alpha} )) = \mathcal{S}(s_{\alpha} w s_{\alpha})^{-1}$ for any $\alpha \in \Delta$ with $\ell(s_{\alpha} w s_{\alpha}) > \ell(w)$, such that $s_{\alpha}$ and $w$ do not commute.

Suppose that $\alpha \in \Delta$, $s_{\alpha}$ commutes with $w$, and $\ell(s_{\alpha} w) > \ell(w)$.   Then
\[
\phi(\mathcal{S}(s_{\alpha} w)) \mathcal{S}(s_{\alpha} w) = \phi(t_{\alpha} \sigma_{\alpha} \mathcal{S}(w)) \mathcal{S}(w s_{\alpha}) = \phi(t_{\alpha}) \sigma_{\alpha} \mathcal{S}(w)^{-1} \mathcal{S}(w) t_{\alpha} \sigma_{\alpha}
\]
\[
= \phi(t_{\alpha}) t_{\alpha}^{-1} \alpha^{\vee}(-1) = 1,
\]
where in the above we are using that $\mathcal{S}$ satisfies the braid relations and that $s_{\alpha}$ commutes with $w$.

Moreover, for any $\alpha \in \Delta$ with $\ell(s_{\alpha} w s_{\alpha}) > \ell(w)$, we also have

\[
\phi(\mathcal{S}(s_{\alpha} w s_{\alpha})) \mathcal{S}(s_{\alpha} w s_{\alpha} ) = \phi(t_{\alpha} \sigma_{\alpha} \mathcal{S}(w) t_{\alpha} \sigma_{\alpha}) t_{\alpha} \sigma_{\alpha} \mathcal{S}(w) t_{\alpha} \sigma_{\alpha}
\]
\[
= \phi(t_{\alpha}) \sigma_{\alpha} \mathcal{S}(w)^{-1} \phi(t_{\alpha}) \sigma_{\alpha} t_{\alpha} \sigma_{\alpha} \mathcal{S}(w) t_{\alpha} \sigma_{\alpha} = \phi(t_{\alpha}) \sigma_{\alpha} \mathcal{S}(w)^{-1} \phi(t_{\alpha}) t_{\alpha}^{-1} \alpha^{\vee}(-1) \mathcal{S}(w) t_{\alpha} \sigma_{\alpha}
\]
\[
= \phi(t_{\alpha}) \sigma_{\alpha} t_{\alpha} \sigma_{\alpha} = \phi(t_{\alpha}) t_{\alpha}^{-1} \alpha^{\vee}(-1) = 1,
\]
where again we are using that $\mathcal{S}$ satisfies the braid relations, and that $s_{\alpha}$ and $w$ do not commute.
\end{proof}

\begin{proposition}\label{zeta}
\begin{enumerate} 
\item Let $F$ be an algebraic closure of $\mathbb{F}_q$.  Set $\zeta$ to be a $q-1$ root of $-1$.  If $t_{\alpha} = \alpha^{\vee}(\zeta)$ for all $\alpha \in \Delta$, then $\phi(t_{\alpha}) = \alpha^{\vee}(-1) t_{\alpha}$ for all $\alpha \in \Delta$.
\item Let $F = \mathbb{C}$.  Set $\zeta$ to be a primitive fourth root of unity.  If $t_{\alpha} = \alpha^{\vee}(\zeta)$ for all $\alpha \in \Delta$, then $\phi(t_{\alpha}) = \alpha^{\vee}(-1) t_{\alpha}$ for all $\alpha \in \Delta$.
\end{enumerate}
\end{proposition}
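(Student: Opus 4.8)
The plan is to reduce both parts of the statement to a single elementary observation: in each of the two cases the chosen scalar $\zeta \in F^{\times}$ satisfies $\phi(\zeta) = -\zeta$, and this identity then propagates through the cocharacter $\alpha^{\vee}$ because $\alpha^{\vee}$ is a group homomorphism.

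For part (1), I would first recall that $\phi(c) = c^q$ and that the fixed $\mathbb{F}_q$-structure on $G$ is such that $\phi(t) = t^q$ for all $t \in T$. Hence $\phi(t_{\alpha}) = \phi(\alpha^{\vee}(\zeta)) = \alpha^{\vee}(\zeta)^q = \alpha^{\vee}(\zeta^q)$, the last step using multiplicativity of $\alpha^{\vee}$. Since $\zeta$ is a $(q-1)$-th root of $-1$, i.e. $\zeta^{q-1} = -1$, we get $\zeta^q = \zeta\cdot\zeta^{q-1} = -\zeta$, so $\phi(t_{\alpha}) = \alpha^{\vee}(-\zeta) = \alpha^{\vee}(-1)\,\alpha^{\vee}(\zeta) = \alpha^{\vee}(-1)\,t_{\alpha}$.

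For part (2), I would instead use that $\phi(c) = \overline{c}$ and that the fixed $\mathbb{R}$-structure is such that $\phi(y(c)) = y(\phi(c))$ for all $y \in X_*(T)$ and $c \in F^{\times}$; applying this with $y = \alpha^{\vee}$ and $c = \zeta$ gives $\phi(t_{\alpha}) = \alpha^{\vee}(\overline{\zeta})$. A primitive fourth root of unity satisfies $\zeta^2 = -1$, hence $\overline{\zeta} = \zeta^{-1} = -\zeta$, and therefore $\phi(t_{\alpha}) = \alpha^{\vee}(-\zeta) = \alpha^{\vee}(-1)\,t_{\alpha}$ exactly as before.

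I do not expect a genuine obstacle here: the computation is short once the two compatibilities are in hand. The only point to be careful about is invoking the right one in the right case — the relation $\phi(t) = t^q$ on $T$ in the finite-field case, versus $\phi(y(c)) = y(\phi(c))$ in the complex case — and observing that in both cases $\zeta$ was chosen precisely so that $\phi$ sends it to $-\zeta$, which is exactly what makes the single factor $\alpha^{\vee}(-1)$ appear and no more. Combined with Propositions \ref{braid} and \ref{condition}, this immediately yields Theorem \ref{maintheorem}.
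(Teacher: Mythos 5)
Your proof is correct and follows essentially the same route as the paper's: in the finite-field case it uses $\phi(t)=t^q$ together with multiplicativity of $\alpha^{\vee}$ to reduce to $\zeta^{q-1}=-1$, and in the complex case it uses $\phi(y(c))=y(\overline{c})$ to reduce to $\overline{\zeta}\,\zeta^{-1}=-1$. The only difference is cosmetic — you compute $\phi(t_\alpha)$ directly rather than stating the equivalent condition on $z_\alpha$ first.
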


\begin{proof}
For (1): Note that $\phi(t_{\alpha}) = t_{\alpha}^q$, so if we set $t_{\alpha} = \alpha^{\vee}(z_{\alpha})$, then the desired equality $\phi(t_{\alpha}) = \alpha^{\vee}(-1) t_{\alpha}$ is equivalent to $\alpha^{\vee}(z_{\alpha}^{q-1}) = \alpha^{\vee}(-1)$.  Setting $z_{\alpha}$ to be a $q-1$ root of $-1$ gives us our result.

For (2): Recall that $\phi(y(c)) = y(\phi(c)) \ \forall y \in X_*(T), c \in F^{\times}$.  Setting $t_{\alpha} = \alpha^{\vee}(z_{\alpha})$, then the desired equality $\phi(t_{\alpha}) = \alpha^{\vee}(-1) t_{\alpha}$ is equivalent to $\alpha^{\vee}(\overline{z_{\alpha}} z_{\alpha}^{-1}) = \alpha^{\vee}(-1)$.  Setting $z_{\alpha}$ to be a primitive fourth root of unity gives us our result.
\end{proof}

By Proposition \ref{braid}, Proposition \ref{condition}, and Proposition \ref{zeta}, we have now proven Theorem \ref{maintheorem}.

\end{document}